\newtheorem{theorem}{Theorem}[section]
\newtheorem{lemma}[theorem]{Lemma}
\newcolumntype{a}{>{\columncolor{yellow}}c}
\newcolumntype{b}{>{\columncolor{green}}c}
\title{Barzilai and Borwein  conjugate gradient method equipped with a non-monotone line search technique and its application on non-negative matrix factorization}
\author{ \href{https://orcid.org/0000-0002-5731-8234}{\includegraphics[scale=0.06]{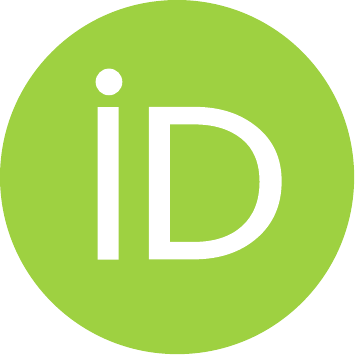}\hspace{1mm} Sajad Fathi Hafshejani}\\
    Department of Math and Computer Science\\
	University of Lethbridge\\
	 Lethbridge, AB, Canada\\
	\texttt{sajad.fathihafshejan@uleth.ca} \\
	\And
	{Daya Gaur} \\
Department of Math and Computer Science\\
	University of Lethbridge\\
	 Lethbridge, AB, Canada\\
	\texttt{daya.gaur@uleth.ca} \\
	
		\And
	{Shahadat Hossain} \\
    Department of Math and Computer Science\\
	University of Lethbridge\\
	 Lethbridge, AB, Canada\\
	\texttt{shahadat.hossain@uleth.ca} \\
	
		\And
	{Robert Benkoczi} \\
    Department of Math and Computer Science\\
	University of Lethbridge\\
	 Lethbridge, AB, Canada\\
	\texttt{robert.benkoczi@uleth.ca } \\
}
\begin{document}
\maketitle

\begin{abstract}
In this paper, we propose a new non-monotone conjugate gradient method for solving unconstrained nonlinear optimization problems. We first modify the non-monotone line search method by introducing a new trigonometric function to calculate the non-monotone parameter, which plays an essential role in the algorithm's efficiency. Then, we apply a convex combination of the Barzilai-Borwein method \citep{Barzilai} for calculating the value of step size in each iteration. Under some suitable assumptions,  we prove that the new algorithm has the global convergence property. The efficiency and effectiveness of the proposed method are determined in practice by   applying the algorithm to some standard test problems and  non-negative matrix factorization problems.
\end{abstract}
\keywords{Non-negative matrix factorization\and Initialization algorithms}

\section{Introduction}
In this paper, we are interested to solve the following unconstrained optimization problem:
\begin{eqnarray}\label{general}
\min_{x\in\Bbb{R}^n}f(x),
\end{eqnarray}
in which $f:\Bbb{R}^n\rightarrow \Bbb{R}$ is a continuously differentiable function. There are various iterative 
approaches for solving (\ref{general}) \citep{Nocedal}. The Conjugate Gradient (CG) method is one such approach. The CG based methods
do not need any second-order information of the objective function. For a given point $x_0\in \Bbb{R}^n$, the iterative formula
describing the  CG method is:
\begin{equation}\label{iter}
x_{k+1}=x_k+\alpha_k d_k,
\end{equation}
in which $x_k$ is current iterate point, $\alpha_k$ is the step size, and $d_k$ is the search direction determined by:
\begin{eqnarray}\label{dk}
d_k=\left\{
\begin{array}{lr}
-g_k\qquad\qquad\quad\qquad k=0,&\\
-g_k+\beta_{k-1}d_{k-1}\qquad k\geq 1,&\\
\end{array} \right.
\end{eqnarray}
where $g_k=\nabla f(x_k)$ is the gradient of the objective function in the current iteration. The conjugate gradient parameter is $\beta_k$, whose choice of different values leads to various CG methods.
The most well-known of the CG methods are the Hestenes-Stiefel (HS) method \citep{hestenes}, Fletcher-Reeves (FR) method \citep{Fletcher64},
Conjugate Descent (CD) \citep{Fletcher13}, and Polak-Ribiere-Polyak (PRP) \citep{prp}.

There are various approaches to determining a suitable  step size in each iteration such as Armijo line search, Goldstein line search, and Wolfe line search \citep{Nocedal}. The Armijo line search finds the largest value of step size in each iteration such that the following inequality holds:
\begin{eqnarray}\label{line}
f(x_k+\alpha_kd_k)\leq f(x_k)+\gamma\alpha_kg_k^Td_k
\end{eqnarray}
in which $\gamma\in(0,1)$ is a constant parameter.
Grippo et al. \citep{Grippo86} introduced a non-monotone Armijo-type line search technique as another way to compute step size.
The Incorporation of  the non-monotone strategy into the gradient and projected gradient  approaches, the conjugate gradient method, and the trust-region methods  has led to significant improvements to these methods. Zhang and Hager \citep{Zhang04} gave some conditions to improve the convergence rate of this strategy. Ahookhosh et al. \citep{Ahookhosh122} built on these results and investigated   a new non-monotone condition:
\begin{equation}\label{amin}
f(x_k+\alpha_kd_k)\leq R_k+\gamma\alpha_kg_k^T d_k,
\end{equation}
where $R_k$ is defined by
\begin{eqnarray}
& R_k=\eta_k f_{l_k}+(1-\eta_k)f_k, & \label{rk}\label{flk}\\
& \eta_k\in[\eta_{\min},\eta_{\max}],~\eta_{\min}\in[0, 1), \ \eta_{\max}\in[\eta_{\min},1], &  \notag\\
& f_{l_k}=\max_{0\leq j\leq m_k}\{f_{k-j}\}, \nonumber \\
&  m_0=0, \ \ 0\leq m_k\leq \min\{m_{k-1}+1,N\} \mbox{ for some } N\geq 0. 
\end{eqnarray}
Note that $\eta_k$ is known as the non-monotone parameter and plays an essential role in the algorithm’s convergence.

Although this new non-monotone strategy in \citep{Ahookhosh122} has some appealing properties, especially in functional performance, current algorithms based on this non-monotone strategy
face the following challenges.
\begin{itemize}
    \item The existing schemes for determining the parameter $\eta_k$ 
may not reduce the value of the objective function significantly in initial iterations.
To overcome this drawback,  we propose a new scheme for choosing $\eta_k$ 
based on the gradient behaviour  of the objective function.
This can reduce the total  number of iterations.
\item Many evaluations of the objective function are needed to find 
the step length $\alpha_k$ in step $k$. 
To make this step more efficient,  we use an adaptive and composite step length procedure from \citep{Li19} to determine   the initial value of the step length in inner iterations.
\item The third issue is the global convergence for the non-monotone CG method. Most exiting CG methods use the Wolfe condition, which plays a vital role in establishing the global convergence of various CG methods \citep{Nazareth01}. Wolfe line search is more expensive than the Armijo line search strategy. Here, we define a suitable conjugate gradient parameter so that the scheme proposed here has global convergence property.

\end{itemize}

By combining the outlined strategies, we propose a modification to the non-monotone line search method. Then, we incorporate this approach into the CG method and introduce a new non-monotone CG algorithm. We prove that our proposed algorithm has  global convergence. Finally, we compare our algorithm and eight other algorithms on standard tests  and non-negative matrix factorization instances. We utilize some criteria such as the number of objective function evaluations, the number of gradient evaluations, the number of iterations, and the CPU time to compare the performance of algorithms.

\section{An improved non-monotone line search algorithm} \label{s:algorithm}

This section discusses the issues with the state of the art of non-monotone line search strategy, choice of the step sizes, and finally, the conjugate gradient parameter.
\subsection{A new scheme of choosing $\eta_k$}
Recall that the   non-monotone line search strategy is determined by equation \eqref{amin} in step $k$.
The parameter $\eta_k$ is involved in the non-monotone term (\ref{flk})
and its choice can have a significant impact on the performance of the algorithm. There are two common approaches for calculating 
$\eta_k$.
The scheme proposed by Ahookhosh et al. \citep{Ahookhosh122}  has been used in most of the existing non-monotone algorithms \citep{Esmaeili,Ahookhosh_Nu,Amini_App14,Ahookhosh15}.
This strategy can be formulated as $\eta_k=\frac{1}{3}\eta_0 (-\frac{1}{2})^k+\frac{2}{3}\eta_0$ 
where $\eta_0=0.15$ and the limit value of $\eta_k$ is 0.1.
The other scheme proposed by Amini et al. \citep{Amini14}, which depends on the behaviour of gradient is given by:
\begin{equation}\label{Amini's_method}
\eta_0=0.95, \ \
\eta_{k}= \left\{
\begin{array}{ll}
\frac{2}{3}\eta_{k-1} +0.01, & \mbox{if }  ~\|g_{k} \|_{\infty}\leq 10^{-3}; \\
\max\{ 0.99\eta_{k-1},0.5\}, & \mbox{otherwise}.
\end{array} \right.
\end{equation}
To illustrate the behaviour of $\eta_k$ proposed in   \citep{Ahookhosh122} and \citep{Amini14}, we solve the problem $f(x)= (x_0-5)^2+\sum_{i=1}^{40} (x_i-1)^2$ for $ x\in \Bbb{R}^{41}$. 
The values of the parameter $\eta_k$ corresponding to the two schemes are displayed in Fig.  \ref{muk} (Left).
 \begin{figure}[h!]
\centering
 \includegraphics[width=.45\textwidth]{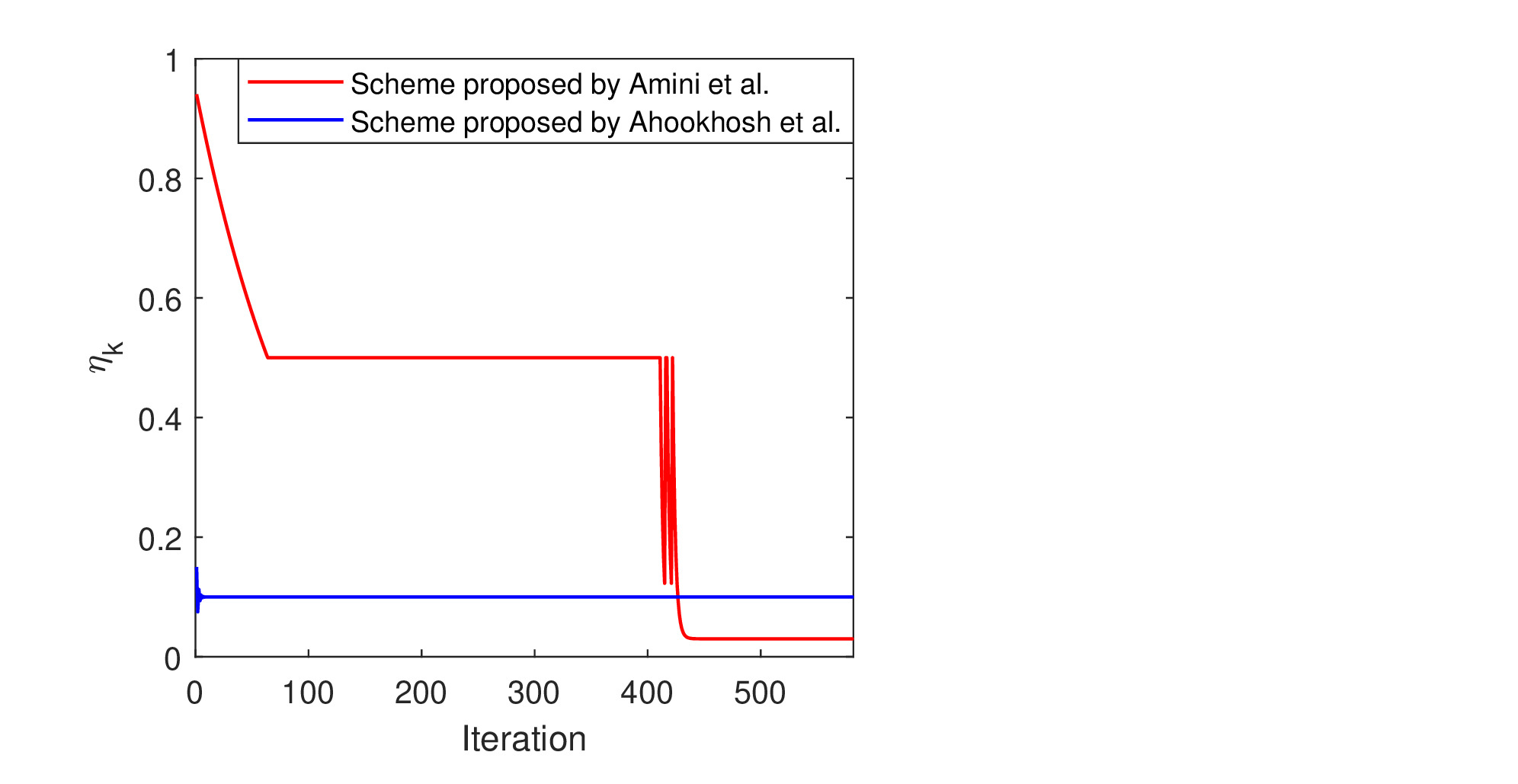}
   \includegraphics[width=.45\textwidth]{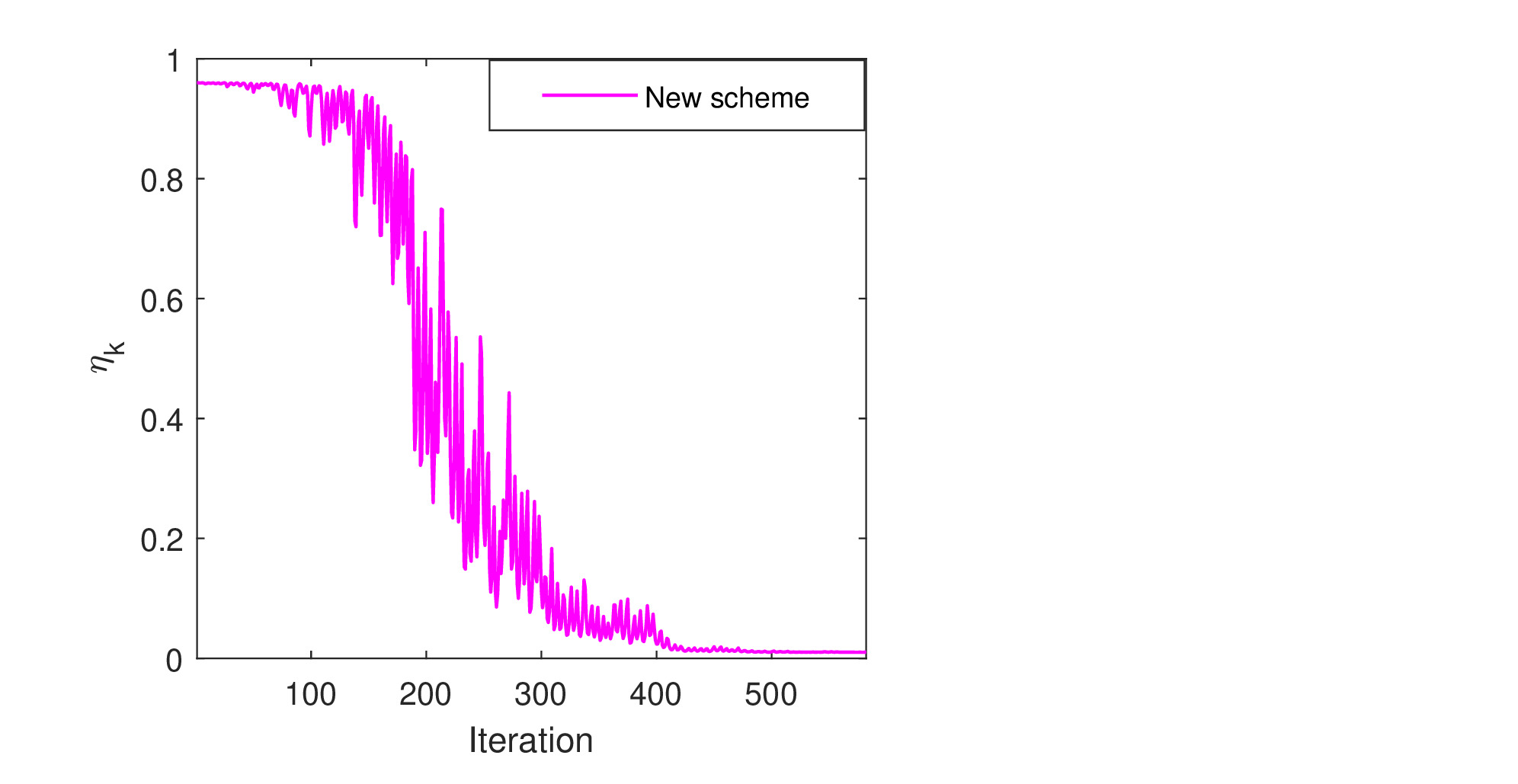}
\caption{(Left): Values of $\eta_k$ proposed in   \citep{Ahookhosh122} and \citep{Amini14}, (Right): Values of $\eta_k$ for the new scheme.}
\label{muk}
\end{figure}
As shown in Fig. \ref{muk},  for the scheme proposed by Ahookhosh et al. \citep{Ahookhosh122}, $\eta_k$
is close to $0.1$ after only a few  iterations. {Notice that  $\eta_k$ in each iteration does not have any connection with the behaviour of the objective function. Thus this scheme is not effective.} In addition, there are two issues with the  scheme introduced by Amini et al. in \citep{Amini14}.
One problem indicated by Fig. \ref{muk} is that that $\eta_k$ decreases relatively
quickly for the first 65 iterations.
{Since the algorithm requires the long iterations to solve his problem}, ideally $\eta_k$ should be close to 1 for these initial iterations.
The second problem is that  the value of $\eta_k$ remains the same  for a large number of iterations and it is not affected by  the behaviour of the objective function.

To avoid theses challenges, we propose an adaptive strategy for calculating the value of  $\eta_k$:
\begin{equation} \label{eq:etakn}
\eta_{k}=0.95\sin\left(\frac{\pi \|g_{k}\|}{1+2\|g_{k}\|}\right)+0.01.
\end{equation}
When $x_k$ is far away from the minimizer, we can reasonably assume that  $\|g_k\|$ is large. Thus the value  of $\eta_k$ defined by \eqref{eq:etakn} is close to 1.
This makes the scheme closer to the original non-monotone strategy in the initial iterations, providing a chance to reduce the value of the objective function more significantly in the initial iterations. On the other hand, when $x_k$ is close to the minimizer, $\|g_k\|$ is small, then the value of $\eta_k$ is close to zero. Thus, the step length is small so that the new point stays in the neighbourhood of the optimal point. Thus the new scheme is closer to the  monotone strategy. We plot the behaviour of  $\eta_k$ denoted by \eqref{eq:etakn} in Fig. \ref{muk} (Right), using the same values of the gradient for the optimization problem mentioned above.

\subsection{ New schemes for choosing $\alpha_k$ }
We utilize a convex combination of the  Barzilai-Borwein (BB) step sizes to calculate an appropriate $\alpha_k$ in each outer iteration as in  \citep{Li19}. Our strategy calculates the value of  $\alpha_k$,   using the following equation:
\begin{equation}\label{newalpha}
\alpha_k^{{\scriptscriptstyle \textrm{CBB}}} =\mu_k\alpha^{(1)}_k+(1-\mu_k)\alpha^{(2)}_k,
\end{equation}
where
\begin{eqnarray*}
	&\alpha_k^{(1)}=\frac{s_k^Ts_k}{s_k^Ty_k},\quad \alpha^{(2)}_k=\frac{s_k^Ty_k}{y_k^Ty_k},\quad s_k:=x_k-x_{k-1},\quad y_k:=g_k-g_{k-1};&\\
	&\mu_k=\frac{K_2}{K_1+K_2}\quad
	K_1=\|\alpha^{(1)}_k y_k-s_k\|^2,\quad K_2=\|(\alpha^{(2)}_k)^{-1}s_k-y_k\|^2.&
\end{eqnarray*}
\subsection{Conjugate gradient parameter}
Here, we propose the new conjugate gradient parameter given by:
\begin{eqnarray}\label{cgpar}
\beta_k=\omega \frac{\|g_k\|}{\|d_{k-1}\|},\quad \omega \in (0,1).
\end{eqnarray}
The complete algorithm  is in Appendix \ref{AppA} (see Algorithm \ref{alg1}). The next lemma proves a key property of   $\beta_k$ which is very important in proving the algorithm’s convergence. The proofs are in the Appendix \ref{AppA}.
\begin{lemma}\label{decent}
For the  search direction $d_k$ and the constant $c>0$ we have:
	\begin{eqnarray}
	d_k^Tg_k\leq -c\|g_k\|.
	\end{eqnarray}
\end{lemma}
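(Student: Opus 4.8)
The plan is to establish the bound by a direct computation with a case split on $k=0$ versus $k\ge 1$, exploiting the recursive definition \eqref{dk} of the search direction together with the explicit form \eqref{cgpar} of the conjugate gradient parameter. No induction is really needed, since the estimate for $d_k^T g_k$ can be obtained from $d_{k-1}$, $g_k$, and the scalar $\beta$ alone. The base case $k=0$ is immediate: because $d_0=-g_0$ we get $d_0^T g_0=-\|g_0\|^2$, which already has the correct sign and shape.

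For $k\ge 1$ I would substitute $d_k=-g_k+\beta_{k-1}d_{k-1}$ and expand the inner product,
\begin{equation*}
d_k^T g_k=-\|g_k\|^2+\beta_{k-1}\,d_{k-1}^T g_k .
\end{equation*}
The whole argument then hinges on controlling the cross term $\beta_{k-1}\,d_{k-1}^T g_k$. The natural tool is Cauchy--Schwarz, $d_{k-1}^T g_k\le \|d_{k-1}\|\,\|g_k\|$, which is legitimate to multiply through by $\beta_{k-1}$ since $\beta_{k-1}>0$ (it is a nonnegative multiple of $\omega\in(0,1)$). After this I would insert the definition of the conjugate gradient parameter. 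The key algebraic feature of \eqref{cgpar} is that the factor $\|d_{k-1}\|$ in the denominator of $\beta$ cancels exactly against the $\|d_{k-1}\|$ produced by Cauchy--Schwarz, leaving a clean multiple of $\|g_k\|^2$. Collecting terms should give
\begin{equation*}
d_k^T g_k\le -\|g_k\|^2+\omega\|g_k\|^2=-(1-\omega)\|g_k\|^2 ,
\end{equation*}
a sufficient-descent estimate with descent constant $1-\omega>0$, positive precisely because $\omega\in(0,1)$.

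I expect the main obstacle to be bookkeeping rather than anything deep: one must make sure the index on $\beta$ matches the index of the norm $\|d_{k-1}\|$ that Cauchy--Schwarz generates, so that the cancellation is exact and no stray ratio of gradient norms survives. A related point to verify is that $\beta_{k-1}$ is well defined, i.e. $d_{k-1}\neq 0$, which holds as long as the method has not already reached a stationary point. Finally, I would reconcile the power of $\|g_k\|$: the computation above yields the quadratic bound $-(1-\omega)\|g_k\|^2$, and under the standing assumption used in the convergence analysis that the iterates stay bounded away from stationarity ($\|g_k\|\ge m>0$), one has $\|g_k\|^2\ge m\|g_k\|$, which converts the quadratic estimate into the stated linear form $d_k^T g_k\le -c\|g_k\|$ with $c=(1-\omega)m$.
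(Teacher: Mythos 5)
Your proposal follows essentially the same route as the paper's own proof: the $k=0$ versus $k\geq 1$ case split, substitution of the recursion \eqref{dk}, Cauchy--Schwarz on the cross term $d_{k-1}^Tg_k$, and the exact cancellation of $\|d_{k-1}\|$ against the denominator of \eqref{cgpar}, yielding $d_k^Tg_k\leq-(1-\omega)\|g_k\|^2$. Your closing reconciliation step is in fact more careful than the paper, whose proof simply stops at the quadratic bound and never addresses the mismatch with the stated linear form $-c\|g_k\|$; since the paper's convergence theorem later invokes the quadratic estimate $-\gamma\alpha_k g_k^Td_k\geq c_1\gamma\alpha_k\|g_k\|^2$, the exponent in the lemma statement appears to be a typo rather than something requiring your extra hypothesis $\|g_k\|\geq m$.
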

 The following assumptions are used to analyze the convergence properties of  Algorithm \ref{alg1}.
\begin{description}
	\item[H1] The level set $
	\mathcal{L}(x_0)=\{x|f(x)\leq f(x_0),~~~~x\in \Bbb{R}^n\}$ is bounded set.
	\item[H2] The gradient of objective function is Lipschitz continuous over an open convex set $C$ containing $	\mathcal{L}(x_0)$. That is:
	\begin{equation*}
	\|g(x)-g(y)\|\leq L\|x-y\|,\qquad \forall ~x,y\in C.
	\end{equation*}
\end{description}
We prove the following Theorem about the global convergence of Algorithm \ref{alg1}, the proof of which follows from the Lemmas presented in 
this section. Please see the appendix for the proofs.

\begin{theorem}\label{glob}
	{Let $(H1)$, $(H2)$, and Lemmas \ref{decent} and \ref{aboveserch} hold. Then, for the
	sequence $\{x_k\}$ generated by Algorithm \ref{alg1}, we have $\lim_{k\rightarrow \infty} \|g_k\|=0.$
}\end{theorem}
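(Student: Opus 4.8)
The plan is to follow the standard non-monotone convergence framework (in the spirit of Zhang--Hager and Ahookhosh et al.), feeding the sufficient-descent estimate of Lemma \ref{decent} and the search-direction bound of Lemma \ref{aboveserch} into a step-length argument based on the Lipschitz assumption (H2). First I would verify that the non-monotone backtracking line search is well defined: since Lemma \ref{decent} guarantees $d_k^T g_k < 0$ whenever $g_k \neq 0$, and since $R_k$ in \eqref{rk} is a convex combination of $f_k$ and $f_{l_k}$ with $\eta_k \in [0,1]$ so that $R_k \geq f_k$, the Armijo-type test \eqref{amin} is satisfied for all sufficiently small step lengths. Hence the inner loop terminates after finitely many reductions and produces some $\alpha_k > 0$.

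Second, I would show that the sequence $\{f_{l_k}\}$ is non-increasing and convergent. From \eqref{amin} together with $R_k \leq f_{l_k}$ and $g_k^T d_k < 0$ one obtains $f_{k+1} \leq f_{l_k} - \gamma\alpha_k|g_k^T d_k|$. A short induction on the window size $m_k$, using $0 \leq m_k \leq \min\{m_{k-1}+1, N\}$ to unroll the maximum in the definition of $f_{l_k}$, then yields $f_{l_{k+1}} \leq f_{l_k}$. Assumption (H1) keeps the iterates in the bounded level set $\mathcal{L}(x_0)$, on which $f$ is bounded below, so $\{f_{l_k}\}$ decreases to a finite limit. Passing to the limit in the telescoped inequality gives $\lim_{k\to\infty}\alpha_k|g_k^T d_k| = 0$.

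Third, I would establish a positive lower bound on the accepted step length. Whenever backtracking with reduction factor $\rho\in(0,1)$ occurs, the rejected trial step $\alpha_k/\rho$ violated \eqref{amin}; expanding $f\bigl(x_k + (\alpha_k/\rho)d_k\bigr)$ to second order and invoking the Lipschitz bound (H2) leads to $\alpha_k \geq \dfrac{2\rho(1-\gamma)\,|g_k^T d_k|}{L\,\|d_k\|^2}$. Substituting this into $\alpha_k|g_k^T d_k| \to 0$ forces $|g_k^T d_k|^2/\|d_k\|^2 \to 0$. Combining the lower bound on $|g_k^T d_k|$ from Lemma \ref{decent} with the upper bound on $\|d_k\|$ from Lemma \ref{aboveserch}, both of which are controlled through $\|g_k\|$ and the conjugate-gradient parameter \eqref{cgpar}, this limit collapses to $\lim_{k\to\infty}\|g_k\| = 0$, as claimed.

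The step I expect to be the main obstacle is propagating $\lim_{k}\alpha_k|g_k^T d_k| = 0$ from the subsequence indexed by the maximizers $l_k$ to the full index sequence: this is the technical heart of the Zhang--Hager argument and requires carefully tracking the sliding window of $f_{l_k}$ so that the per-step decrease at every index $k$ (not merely at the $l_k$) is shown to vanish. A secondary care point is confirming that the new trigonometric choice of $\eta_k$ in \eqref{eq:etakn} indeed stays in $[0,1]$ (in fact in $[\eta_{\min},\eta_{\max}]$), which is exactly what is needed for the inequalities $f_k \leq R_k \leq f_{l_k}$ used throughout.
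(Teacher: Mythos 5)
Your proposal is correct and follows essentially the same route as the paper: the paper likewise establishes the monotone decrease and convergence of $f_{l_k}$ (Lemma \ref{de} and the Amini-style limit propagation in Lemmas \ref{inf} and \ref{rinf}), a Lipschitz-based lower bound on the accepted step (Lemma \ref{low-bou}), and then combines these with Lemmas \ref{decent} and \ref{aboveserch} to force $\|g_k\|\to 0$. The only organizational difference is that the paper substitutes the two lemmas inside the step-size bound to obtain a uniform constant $\lambda$ and then concludes from $R_k - f_{k+1}\to 0$, whereas you keep the bound in the form $\alpha_k \geq 2\rho(1-\gamma)|g_k^Td_k|/(L\|d_k\|^2)$ and substitute the lemmas at the very end---the same argument in a slightly different order.
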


\begin{lemma}\label{aboveserch}
	Suppose that the search direction $d_k$ with the CG parameter $\beta_k$ given by (\ref{cgpar}) is generated by  Algorithm \ref{alg1}. Then, an upper bound for $d_k$ is given by $\|d_k\|\leq (1+\omega)\|g_k\|.$
\end{lemma}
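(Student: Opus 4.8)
The plan is to establish the bound directly from the triangle inequality, using the explicit form of the conjugate gradient parameter in \eqref{cgpar}. The single mechanism driving the whole argument is that the factor $\|d_{k-1}\|$ sitting in the denominator of the CG parameter is exactly cancelled by the norm $\|d_{k-1}\|$ that the triangle inequality produces when it is applied to the recursive direction formula \eqref{dk}. Once this cancellation is noticed, the estimate is essentially one line.

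First I would dispose of the base case $k=0$. Since $d_0=-g_0$ by \eqref{dk}, we have $\|d_0\|=\|g_0\|\leq(1+\omega)\|g_0\|$, where the inequality holds because $\omega\in(0,1)$ is positive; so the claimed bound is trivially satisfied at the start. For the step $k\geq 1$, I would begin from the recursion $d_k=-g_k+\beta_{k-1}d_{k-1}$ and apply the triangle inequality to get $\|d_k\|\leq\|g_k\|+\beta_{k-1}\|d_{k-1}\|$. Substituting the value of the parameter that multiplies $d_{k-1}$, namely $\omega\,\|g_k\|/\|d_{k-1}\|$ from \eqref{cgpar}, the product $\beta_{k-1}\|d_{k-1}\|$ collapses to $\omega\|g_k\|$, since the denominator $\|d_{k-1}\|$ of the parameter cancels against the $\|d_{k-1}\|$ supplied by the triangle inequality. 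Adding the two contributions then yields $\|d_k\|\leq\|g_k\|+\omega\|g_k\|=(1+\omega)\|g_k\|$, which is exactly the asserted bound.

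Because the cancellation is exact, no induction hypothesis on $\|d_{k-1}\|$ is actually required: the bound at step $k$ follows immediately from the form of the parameter and the triangle inequality, independently of any estimate on the previous direction. The only point I would watch, and the one place where the argument could go wrong, is keeping the indices of $g$ and $d$ aligned between the direction recursion \eqref{dk} and the parameter definition \eqref{cgpar}, so that the norm $\|d_{k-1}\|$ appearing in the denominator of the parameter matches the very direction vector $d_{k-1}$ being scaled in the update. Once this indexing is consistent, the cancellation goes through verbatim and the bound $\|d_k\|\leq(1+\omega)\|g_k\|$ is immediate; this is therefore the sole subtlety, and it is a bookkeeping issue rather than a substantive analytic difficulty.
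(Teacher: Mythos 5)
Your proof is correct and is essentially identical to the paper's: both apply the triangle inequality to the recursion \eqref{dk} and use the fact that the factor $\|d_{k-1}\|$ in the denominator of $\beta$ from \eqref{cgpar} cancels exactly, giving $\|d_k\|\leq\|g_k\|+\omega\|g_k\|=(1+\omega)\|g_k\|$. Your explicit treatment of the $k=0$ case and your remark on index alignment (the paper's indexing of $\beta$ between \eqref{dk}, \eqref{cgpar}, and Algorithm \ref{alg1} is in fact slightly inconsistent, though the intended cancelling interpretation is the one both you and the paper use) are minor additions, not a different argument.
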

\begin{lemma}\label{low-bou}
	Suppose that $x_k$ is not a stationary point of (\ref{general}). Then there exists a constant
	\begin{equation*}
	{\lambda}=\min \left\{\beta_1\rho,\frac{2(1-\omega)\rho(1-\gamma)}{L(1+\omega)^2}\right\},
	\end{equation*}
	such that $\alpha_k\geq {\lambda}$.
\end{lemma}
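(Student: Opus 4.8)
The plan is to bound $\alpha_k$ from below by exploiting the backtracking structure of the inner line search that produces it, splitting into two cases according to whether the first trial step is accepted or at least one reduction by the factor $\rho$ takes place. If the first trial step is accepted, then $\alpha_k$ equals the initial step length, which the initialization rule of the inner iteration keeps above a fixed positive constant; this yields $\alpha_k\ge\beta_1\rho$ and disposes of the first case. The whole substance of the argument lies in the second case, where I must rule out that repeated backtracking can drive $\alpha_k$ arbitrarily close to $0$.

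So suppose $\alpha_k$ is accepted only after at least one reduction, so that the previous trial step $\alpha_k/\rho$ violates the non-monotone Armijo condition \eqref{amin}. The first step is to remove the non-monotone reference value: since $f_{l_k}=\max_{0\le j\le m_k}f_{k-j}\ge f_k$ and $\eta_k\in[0,1]$, the convex combination \eqref{rk} satisfies $R_k\ge f_k$, so the failed test gives
\begin{equation*}
f(x_k+(\alpha_k/\rho)d_k)-f_k > \gamma\,(\alpha_k/\rho)\,g_k^Td_k .
\end{equation*}
I would then bound the left-hand side from above using H2: writing $f(x_k+td_k)-f_k=t\,g_k^Td_k+\int_0^t(g(x_k+sd_k)-g_k)^Td_k\,ds$ and applying the Lipschitz estimate $\|g(x_k+sd_k)-g_k\|\le Ls\|d_k\|$ with Cauchy--Schwarz produces the quadratic bound $f(x_k+td_k)-f_k\le t\,g_k^Td_k+\tfrac{L}{2}t^2\|d_k\|^2$. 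Taking $t=\alpha_k/\rho$ and combining with the displayed inequality, the linear terms partially cancel, and dividing by the positive quantity $\alpha_k/\rho$ leaves
\begin{equation*}
(1-\gamma)\bigl(-g_k^Td_k\bigr) < \frac{L}{2}\,\frac{\alpha_k}{\rho}\,\|d_k\|^2 .
\end{equation*}

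It then remains to insert the two previously established estimates. The sufficient-descent property of Lemma \ref{decent} (which follows from the direction recursion together with Cauchy--Schwarz and the definition \eqref{cgpar}, in the form $-g_k^Td_k\ge(1-\omega)\|g_k\|^2$) bounds the left side below, while Lemma \ref{aboveserch} gives $\|d_k\|^2\le(1+\omega)^2\|g_k\|^2$ on the right. Because $x_k$ is not stationary, $\|g_k\|>0$, so the common factor $\|g_k\|^2$ cancels, and solving for $\alpha_k$ yields $\alpha_k>\dfrac{2(1-\omega)\rho(1-\gamma)}{L(1+\omega)^2}$. Combining the two cases gives $\alpha_k\ge\lambda$ with $\lambda$ exactly as stated.

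The main obstacle I anticipate is the careful handling of the non-monotone reference value $R_k$: the quadratic Armijo argument is classical for the monotone condition \eqref{line}, and the entire reduction hinges on the observation $R_k\ge f_k$, which lets me discard the non-monotone term and recover a monotone-style inequality. The remaining care is bookkeeping: matching the powers of $\|g_k\|$ so that they cancel (which is precisely why I need the descent bound in the quadratic form $(1-\omega)\|g_k\|^2$ and the direction bound squared), and checking that $x_k$ non-stationary guarantees both $\|g_k\|>0$ and $d_k\neq0$, so that every division performed above is legitimate.
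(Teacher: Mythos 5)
Your proposal is correct and follows essentially the same route as the paper's own proof: the same two-case split on whether backtracking occurred, the same Lipschitz-based quadratic expansion $f(x_k+td_k)\le f_k+t\,g_k^Td_k+\tfrac{L}{2}t^2\|d_k\|^2$, the same use of $R_k\ge f_k$ to reduce the failed non-monotone test at $\alpha_k/\rho$ to a monotone-style inequality, and the same insertion of Lemmas \ref{decent} and \ref{aboveserch} to cancel $\|g_k\|^2$. Your explicit remark that Lemma \ref{decent} must be used in the squared form $-g_k^Td_k\ge(1-\omega)\|g_k\|^2$ (which is what its proof actually establishes, despite the unsquared norm in its statement) is a point the paper glosses over, and your handling is if anything slightly cleaner than the paper's, which contains typographical slips such as $\alpha_2^2$ in place of $\alpha_k$.
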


\section{Numerical Results}
In this section we test the new algorithm to solve a set of standard optimization problems and the non-negative matrix factorization problem, which is a non-convex optimization problem. The implementation level details are  in Appendix \ref{AppB}.
To  demonstrate the efficiency of the   proposed algorithm, we compare  our algorithm and eight other existing algorithms introduced in \citep{Ahookhosh122,Amini14,Jiang,Zhang} on a set of $110$ standards test problems. To describe the behaviour of each strategy, we use
performance profiles proposed by  Dolan and Moré \citep{Dolan}.
Note that the performance profile for an algorithm  $p_s(\tau): \Bbb{R}\mapsto [0, 1]$  is a non-decreasing, piece-wise constant function, continuous from the right at each breakpoint. Moreover, the value  $p_s(1)$ denotes the probability that the algorithm will win against the rest of the algorithm. More information  on the  performance profile is in Appendix \ref{AppB}. We plot  the performance profile of each algorithm in terms of the total number of outer iteration and the  CPU time on the set of standard test problems in Fig. \ref{results}.  
 \begin{figure}[h!]
\centering
 \includegraphics[width=.45\textwidth]{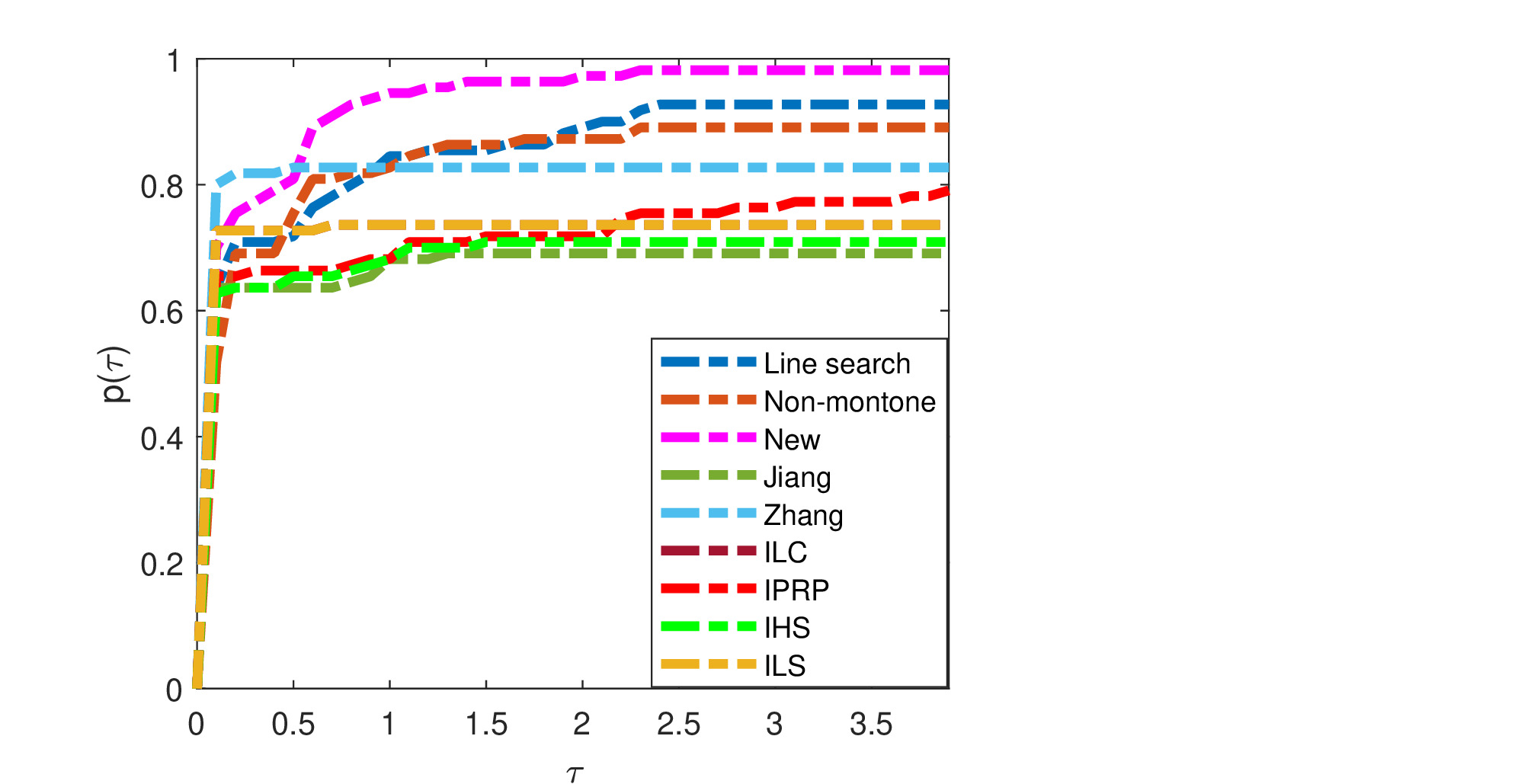}
   \includegraphics[width=.45\textwidth]{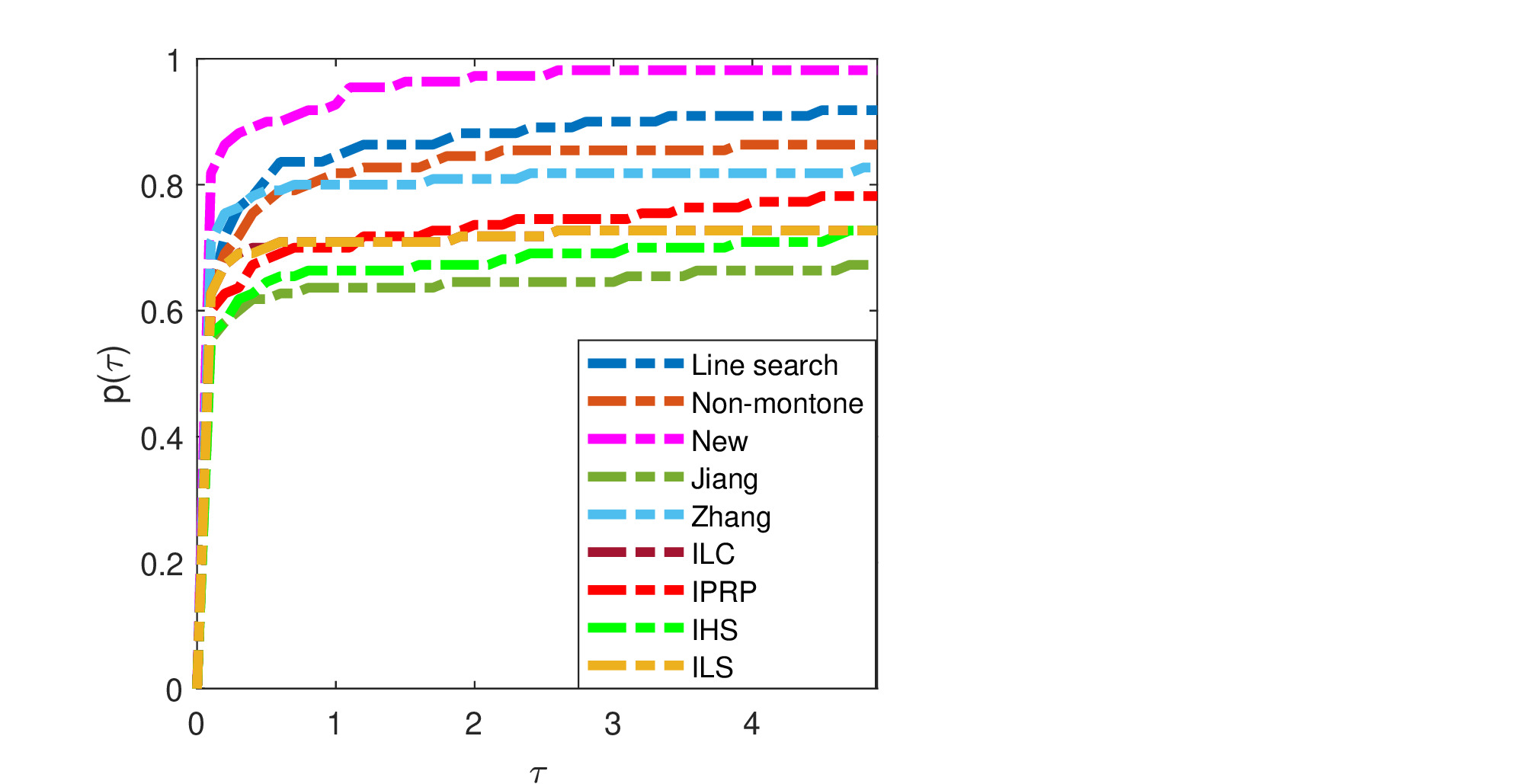}
\caption{(Left): Performance profiles of the total number of outer iterations, (Right): Performance profiles of CPU Time.}
\label{results}
\end{figure}

We also apply our algorithm  to solve the  Non-Negative Matrix Factorization (NMF)
which has several applications in image processing such as face detection problems. 
Given a non-negative matrix $V\in\Bbb{R}^{m\times n}$, a NMF
finds two non-negative matrices
$W\in\Bbb{R}^{m\times k}$ and $H\in\Bbb{R}^{k\times n}$ with
$k\ll\min(m,n)$ such that $X\approx WH$.  This problem can be formulated as
\begin{equation}\label{opti-n}
\min_{W,H\geq0} F(W,H)=\frac{1}{2}\|V-WH\|_{F}^2.
\end{equation}
Equation \eqref{opti-n} is a non-convex optimization problem. We compare our method and Zhang's algorithm \citep{Zhang} on  some random datasets and reported these results in Appendix \ref{AppB}.

\section{Conclusion} In this paper, we introduced a new non-monotone conjugate gradient  algorithm based on efficient Barzilai-Borwein step size. We introduced a new non-monotone parameter based on gradient behaviour and determined by  a trigonometric function. We use a convex combination of the determined method to compute the step size value in each iteration.   We prove that the   proposed algorithm has global convergence. We implemented and tested our algorithm on a set of standard test problems and the non-negative matrix factorization problems. The   proposed algorithm can solve $98\%$ of the test problems for a set of standard test instances. For the non-negative matrix factorization, the results indicate that our algorithm is more efficient compared to  Zhang' s method \citep{Zhang}.

\bibliographystyle{unsrtnat}
\bibliography{references}
\section{Appendix A}\label{AppA}

\subsection{Algorithm}
In this section, we describe the new non-monotone conjugate gradient algorithm. Algorithm \ref{alg1} consists of two loops, inner and outer loop. In each inner loop, the value of step size by using non-monotone line search strategy is computed and then the new point, search direction, and conjugate gradient parameter are  calculated.

\begin{algorithm}[H]
\DontPrintSemicolon
  \KwInput{$x_0\in \Bbb{R}^n,~~\epsilon>0,~ \rho\in(0,1),~~
            \gamma\in (0,1),$ and $N=5$}
  \KwOutput{$x^*$}
  \Kwset{$k=0$}
 \While{$\|g_k\|\geq \epsilon$}{
 \While{Eq. (\ref{amin}) is False}{$\alpha\leftarrow\rho\alpha$}
 $\alpha_k\leftarrow\alpha$ \;
  Generate the new point $x_{k+1}=x_k+\alpha_k d_k$\;
  Compute the parameter $\beta_{k}$ by (\ref{cgpar})\;
   Generate $d_{k+1}$ by (\ref{dk})\;
   Calculate $\eta_{k+1}$ by using (\ref{eq:etakn})\;
   $k\leftarrow k+1$
 }
	\caption{\textbf{An improved non-monotone conjugate gradient method}}
		\label{alg1}
	\end{algorithm}

\subsection{Convergence} \label{s:convergence}
The proofs of the various lemmas and the main theorem are presented in this section.
\begin{proof} Proof of Lemma \ref{decent}:\\
If $k=0$, we have
	$$d_k^Tg_k=-\|g_k\|^2<0. $$
If $k\geq 1$, using (\ref{dk}) and (\ref{cgpar}), and we have:
\begin{eqnarray*}
d_k^Tg_k=-\|g_k\|^2+\omega\frac{\|g_k\|}{\|d_{k-1}\|}d_{k-1}^Tg_k.
\end{eqnarray*}	
Using the Cauchy-Schwarz inequality and we have:	
\begin{eqnarray}
	d_k^Tg_k\leq-\|g_k\|^2+\omega \|g_k\|^2=-(1-\omega)\|g_k\|^2.
\end{eqnarray}	    
\end{proof}
 \begin{proof} Proof of Lemma \ref{aboveserch}:\\
    The proof is obtained by combining (\ref{dk}) and the triangle inequality, that is:
    \begin{eqnarray*}
    \|d_k\|\leq \|g_k\|+\beta_k \|d_{k-1}\|=(1+\omega)\|g_k\|.
    \end{eqnarray*}
\end{proof}
To prove the convergence results, we need the following elementary lemmas. 
\begin{lemma}\label{de}
	Suppose that the sequence $\{x_k\}$ is generated by Algorithm 1. Then, $f_{l_k}$ is a decreasing sequence.
\end{lemma}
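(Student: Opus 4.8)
The plan is to derive the monotonicity of the window maximum $f_{l_k}$ directly from the acceptance rule of the inner loop together with two elementary order properties. First I would note that the new iterate $x_{k+1}$ is accepted only once the non-monotone Armijo condition \eqref{amin} holds, so that $f_{k+1}\le R_k+\gamma\alpha_k g_k^T d_k$. Since $\alpha_k>0$, $\gamma\in(0,1)$, and Lemma \ref{decent} ensures $g_k^T d_k<0$, the correction term is strictly negative, which already gives $f_{k+1}\le R_k$.

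Next I would use that $R_k=\eta_k f_{l_k}+(1-\eta_k)f_k$ is a convex combination of $f_{l_k}$ and $f_k$ with $\eta_k\in[0,1]$. Because $f_k=f_{k-0}$ is one of the terms over which the maximum defining $f_{l_k}$ is taken, we have $f_k\le f_{l_k}$, hence $R_k\le f_{l_k}$. Chaining this with the previous estimate produces the key inequality $f_{k+1}\le f_{l_k}$, which says that every accepted function value is dominated by the current window maximum.

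The remaining work is pure index bookkeeping. Using the recurrence $m_{k+1}\le m_k+1$, the indices $\{k+1,k,\dots,k+1-m_{k+1}\}$ entering $f_{l_{k+1}}$ are contained in $\{k+1\}\cup\{k,k-1,\dots,k-m_k\}$, so taking maxima gives $f_{l_{k+1}}\le\max\{f_{k+1},f_{l_k}\}$; since $f_{k+1}\le f_{l_k}$, the right side equals $f_{l_k}$, and therefore $f_{l_{k+1}}\le f_{l_k}$. The only point demanding care --- and the step I expect to be the main obstacle --- is verifying this index containment, i.e.\ that the lower endpoint $k+1-m_{k+1}$ never falls below $k-m_k$; this is precisely what the window-size bound $m_{k+1}\le m_k+1$ guarantees, while everything else reduces to monotonicity of the maximum and of convex combinations.
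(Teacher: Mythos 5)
Your proof is correct and follows essentially the same route as the paper's: acceptance condition plus the descent property of Lemma \ref{decent} gives $f_{k+1}\le R_k$, the convex-combination bound $R_k\le f_{l_k}$ (from $f_k\le f_{l_k}$) yields $f_{k+1}\le f_{l_k}$, and the window bound $m_{k+1}\le m_k+1$ gives $f_{l_{k+1}}\le\max\{f_{k+1},f_{l_k}\}=f_{l_k}$. The only difference is cosmetic --- you apply the descent term before bounding $R_k$ by $f_{l_k}$, while the paper does it in the opposite order.
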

\begin{proof}
	We use the definition $R_k$ and (\ref{flk}), which imply that
	\begin{equation}\label{rk}
	R_k=\eta_kf_{l(k)}+(1-\eta_k)f_k\leq\eta_k f_{l(k)}+(1-\eta_k)f_{l(k)}=f_{l(k)}.
	\end{equation}
	It follows that:
	\begin{equation*}\label{dec}
	f_{k+1}\leq R_k+\gamma \alpha_k g_k^T d_k\leq f_{l(k)}
	+\gamma \alpha_k g_k^T d_k.
	\end{equation*}
	By using the Lemma \ref{decent}, we  can conclude that:
	\begin{equation}\label{dec-f}
	f_{k+1}\leq f_{l(k)}.
	\end{equation}
	On the other hand, from (\ref{flk}) we have:
	\begin{eqnarray*}
		f_{l(k+1)}&=&\max_{0\leq j\leq m(k+1)}\left\{f_{k+1-j}\right\}\\
		&\leq&\max_{0\leq j\leq m(k)+1}\left\{f_{k+1-j}\right\}=\max\left\{f_{l(k)},f_{k+1}\right\}.
	\end{eqnarray*}
	By applying (\ref{dec-f}), that is $f_{k+1}\leq f_{l(k)}$, we   conclude that $f_{l(k+1)}\leq f_{l(k)}$.
	This  shows that the sequence $f_{l(k)}$ is a decreasing sequence.
\end{proof}
\begin{lemma}
	Suppose that the sequence $\{x_k\}$ is generated by Algorithm 1. Then, for all $k\geq {0}$, we have $x_k\in \mathcal{L}(x_0)$.
\end{lemma}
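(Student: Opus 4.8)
The plan is to show directly that $f(x_k)\le f(x_0)$ for every $k$, since this inequality is precisely the membership condition $x_k\in\mathcal{L}(x_0)$. The whole argument rests on the monotonicity just established in Lemma \ref{de} together with two elementary observations about the definition of $f_{l(k)}$ in \eqref{flk}.

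First I would pin down the base case. Because $m_0=0$, the maximum in \eqref{flk} collapses to a single term, giving $f_{l(0)}=\max_{0\le j\le 0}\{f_{-j}\}=f_0=f(x_0)$. This identifies the first element of the decreasing sequence $\{f_{l(k)}\}$ with the value $f(x_0)$ defining the level set, which is the anchor of the whole estimate.

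Next I would record that $f_k\le f_{l(k)}$ for every $k$: taking $j=0$ in $f_{l(k)}=\max_{0\le j\le m(k)}\{f_{k-j}\}$ shows that $f_k$ is one of the terms over which the maximum is taken, hence cannot exceed it. Combining this with the monotonicity $f_{l(k)}\le f_{l(k-1)}\le\cdots\le f_{l(0)}$ from Lemma \ref{de}, and then with the base case, yields the chain
\begin{equation*}
f(x_k)=f_k\le f_{l(k)}\le f_{l(0)}=f(x_0),
\end{equation*}
valid for all $k\ge 0$. By the definition of $\mathcal{L}(x_0)$ this is exactly $x_k\in\mathcal{L}(x_0)$, completing the proof.

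There is no genuine obstacle here; the statement is a bookkeeping consequence of Lemma \ref{de}. The only points that require a moment of care are verifying the base case $f_{l(0)}=f(x_0)$ from $m_0=0$, and noting that $f_k$ itself sits inside the max defining $f_{l(k)}$ so that $f_k\le f_{l(k)}$. Once these two facts are in hand, the telescoping inequality is immediate and the lemma follows.
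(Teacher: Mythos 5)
Your proof is correct and follows essentially the same route as the paper's: the paper's argument also reduces to the chain $f_{k+1}\le f_{l(k+1)}\le f_{l(k)}\le f_0$, using $f_k\le f_{l(k)}$ from the max in \eqref{flk}, the monotonicity of $\{f_{l(k)}\}$ from Lemma \ref{de}, and the base case anchored at $f_0$. The only difference is cosmetic: the paper dresses the argument as an induction and records the auxiliary inequality $f_{k+1}\le R_{k+1}$ along the way (which it reuses later), whereas you go directly to the telescoping chain, which is cleaner and equally valid.
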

\begin{proof}
	Definition of $f_{l(k+1)}$ implies that $f_{k+1}\leq f_{l(k+1)}$ for any $k\geq 0$.
	Therefore, we have:
	\begin{eqnarray}
	f_{k+1}&=&\eta_{k+1}f_{k+1}+(1-\eta_{k+1})f_{k+1}\nonumber\\
	&\leq& \eta_{k+1}f_{l({k+1})}+(1-\eta_{k+1})f_{k+1}=R_{k+1},\qquad \forall~k\in N_k\label{rkk}
	\end{eqnarray}
	By using definition of $R_k$, we can conclude that $R_0=f_0$. Now, by induction,
	assuming $x_i\in \mathcal{L}(x_0)$, for all $i=1,2,\ldots,k$, we show that $x_{k+1}\in \mathcal{L}(x_0)$. Relations
	(\ref{flk}) and (\ref{rk}) together with  Lemma \ref{de} imply that:
	
	\begin{equation*}
	f_{k+1}\leq f_{l(k+1)}\leq f_{l(k)}\leq f_0,
	\end{equation*}
	which implies that the sequence ${x_k}$ is contained in $\mathcal{L}(x_0)$.  
\end{proof}
The next part of  this section   describes some convergence results for the Algorithm \ref{alg1}.
\begin{lemma}
	Suppose that Algorithm \ref{alg1} generates the sequence $\{x_k\}$ and ($H1$)--($H2$) hold. Then, the sequence $\{f_{l(k)}\}$ is convergent.
\end{lemma}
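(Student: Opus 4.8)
The plan is to apply the monotone convergence theorem for real sequences: a monotonically decreasing sequence that is bounded below converges to its infimum. Lemma \ref{de} already supplies the monotonicity, having shown that $\{f_{l(k)}\}$ is a decreasing sequence, so the entire task reduces to producing a finite lower bound for $\{f_{l(k)}\}$.

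To obtain that lower bound, I would first argue that $\mathcal{L}(x_0)$ is compact. Since $f$ is continuously differentiable, it is in particular continuous, so $\mathcal{L}(x_0)=\{x : f(x)\leq f(x_0)\}$ is closed, being the preimage of $(-\infty,f(x_0)]$; assumption (H1) makes it bounded, and hence compact. A continuous function attains its infimum on a compact set, so the constant $f_{\min}:=\min_{x\in\mathcal{L}(x_0)}f(x)$ is finite.

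Next I would invoke the preceding lemma, which guarantees $x_k\in\mathcal{L}(x_0)$ for every $k$. Because each term $f_{k-j}=f(x_{k-j})$ entering the definition $f_{l(k)}=\max_{0\leq j\leq m(k)}\{f_{k-j}\}$ is evaluated at a point of the level set, we have $f_{k-j}\geq f_{\min}$, and therefore $f_{l(k)}\geq f_{\min}$ for all $k$. With $\{f_{l(k)}\}$ decreasing and bounded below by $f_{\min}$, the monotone convergence theorem yields convergence of the sequence.

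I do not anticipate a substantive obstacle; the argument is the standard \emph{monotone-plus-bounded} reasoning. The only point requiring a little care is the compactness of $\mathcal{L}(x_0)$ --- specifically that it is closed --- which follows immediately from continuity of $f$, while its boundedness is exactly assumption (H1). Note that (H2) is not actually needed for this particular lemma; it will be used later when passing from convergence of $\{f_{l(k)}\}$ to the gradient statement of Theorem \ref{glob}.
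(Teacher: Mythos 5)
Your proposal is correct and follows essentially the same route as the paper: Lemma \ref{de} supplies monotonicity, the iterates' containment in the bounded level set $\mathcal{L}(x_0)$ (assumption (H1)) supplies a finite lower bound, and the monotone convergence theorem finishes the argument. You simply make explicit what the paper leaves implicit --- the closedness/compactness of $\mathcal{L}(x_0)$ and the attainment of the infimum --- and your observation that (H2) is not actually needed here is accurate.
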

\begin{proof} By using Lemma \ref{de} and the fact that $f_{l(0)}=f_0$ imply that the sequence $\{x_{l(k)}\}$
	remains in level set $\mathcal{L}(x_0)$. On the other hand, this fact $f(x_k)\leq f(x_{l(k)})$ proves that the sequence
	$\{x_{k}\}$ remains in $\mathcal{L}(x_0)$. Therefore, ($H1$) together with Lemma \ref{de} imply that the sequence
	$\{f_{l(k)}\}$ is convergent.
\end{proof}
\begin{lemma}\label{inf}
	Suppose that (H1) holds and the direction $d_k$ satisfies the first item of Lemma \ref{de}. Then for the
	sequence $\{x_{k}\}$  generated by Algorithm \ref{alg1}, we have:
	\begin{equation*}
	\lim_{k\rightarrow \infty}f_{l(k)}= \lim_{k\rightarrow \infty}f_k.
	\end{equation*}
\end{lemma}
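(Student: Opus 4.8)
The plan is to first fix $f^{*}:=\lim_{k\to\infty} f_{l(k)}$, which exists by the preceding lemma: the sequence $\{f_{l(k)}\}$ is decreasing by Lemma \ref{de} and is bounded below because every $x_k$ lies in the level set $\mathcal{L}(x_0)$, which is compact since it is bounded by (H1) and closed by continuity of $f$. Since $f_k$ is exactly the $j=0$ term in the maximum defining $f_{l(k)}$, we always have $f_k\le f_{l(k)}$, hence $\limsup_{k}f_k\le f^{*}$. The entire problem therefore reduces to proving $\liminf_{k} f_k\ge f^{*}$, i.e. that $f_k\to f^{*}$.

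Write $\hat l(k)$ for the index in $\{k-m(k),\dots,k\}$ attaining the maximum, so that $f_{l(k)}=f_{\hat l(k)}$ and, since $m(k)\le N$, $\hat l(k)\ge k-N\to\infty$. Combining the acceptance rule (\ref{amin}) with $R_{k}\le f_{l(k)}$ from (\ref{rk}) gives, for every $i\ge 1$, the inequality $f_{i}\le f_{l(i-1)}+\gamma\alpha_{i-1}g_{i-1}^{T}d_{i-1}$. Applying it at $i=\hat l(k)$ yields $f_{l(k)}\le f_{l(\hat l(k)-1)}+\gamma\alpha_{\hat l(k)-1}g_{\hat l(k)-1}^{T}d_{\hat l(k)-1}$. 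Both $f_{l(k)}$ and $f_{l(\hat l(k)-1)}$ tend to $f^{*}$, and $g_k^{T}d_k\le -(1-\omega)\|g_k\|^{2}\le 0$ by Lemma \ref{decent}, so I can squeeze out $\alpha_{\hat l(k)-1}g_{\hat l(k)-1}^{T}d_{\hat l(k)-1}\to 0$. Using this descent lower bound together with the upper bound $\|d_k\|\le(1+\omega)\|g_k\|$ from Lemma \ref{aboveserch} and the boundedness of $\{\alpha_k\}$, this forces $\alpha_{\hat l(k)-1}\|d_{\hat l(k)-1}\|\to 0$, i.e. $\|x_{\hat l(k)}-x_{\hat l(k)-1}\|\to 0$; uniform continuity of $f$ on the compact set $\mathcal{L}(x_0)$ then gives $f_{\hat l(k)-1}\to f^{*}$.

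I would then iterate this argument. Assuming inductively that $f_{\hat l(k)-j}\to f^{*}$, applying the acceptance rule at $i=\hat l(k)-j$ gives the bound $-\gamma\alpha_{\hat l(k)-j-1}g_{\hat l(k)-j-1}^{T}d_{\hat l(k)-j-1}\le f_{l(\hat l(k)-j-1)}-f_{\hat l(k)-j}\to 0$, and the same squeeze plus uniform continuity propagates the limit to $f_{\hat l(k)-j-1}\to f^{*}$. Running the induction for $j=0,1,\dots,N$ establishes $\lim_{k}f_{\hat l(k)-j}=f^{*}$ for each fixed $j\in\{0,\dots,N\}$. To finish, note that for any $k$ the index $\hat l(k+N+1)$ lies in $\{k+1,\dots,k+N+1\}$, so $k+1=\hat l(k+N+1)-j$ for some $j\in\{0,\dots,N\}$; since the finitely many limits above are all equal to $f^{*}$ they hold uniformly in $j$, and therefore $f_{k+1}\to f^{*}$, which closes the argument.

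The step I expect to be the main obstacle is the passage from $\alpha_{\hat l(k)-1}g_{\hat l(k)-1}^{T}d_{\hat l(k)-1}\to0$ to the displacement $\|x_{\hat l(k)}-x_{\hat l(k)-1}\|\to0$: it is precisely here that one must marry the descent inequality, the norm bound on $d_k$, and an \emph{upper} bound on the step sizes $\alpha_k$ (the companion of Lemma \ref{low-bou}), and then invoke uniform continuity of $f$ on the compact level set. Everything else is bookkeeping on the window indices $\hat l(k)$ together with a finite induction of length $N$.
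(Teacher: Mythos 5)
Your proof is correct and follows essentially the same route as the paper, which does not spell the argument out but defers to Lemma 2 of Amini et al.\ \citep{Amini14}: that proof is exactly this GLL-style finite induction over the window indices $\hat l(k)-j$, $j=0,\dots,N$, combining the squeeze on the acceptance inequality \eqref{amin} with $R_k\leq f_{l(k)}$, the bounds of Lemmas \ref{decent} and \ref{aboveserch}, and uniform continuity of $f$ on the compact level set, finishing with the observation that $k+1=\hat l(k+N+1)-j$ for some $j\leq N$. The step-size upper bound you flag is a genuine requirement---it is what turns $\alpha_k\|d_k\|^2\to 0$ into $\alpha_k\|d_k\|\to 0$---and it is satisfied in the cited setting because backtracking starts from a bounded trial step; for the present paper's unsafeguarded CBB initial step this is a loose end of the paper itself, not of your reconstruction.
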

\begin{proof}
	The proof is similar to Lemma 2 in \citep{Amini14}. Therefore, we omit it here.
\end{proof}

\begin{lemma}\label{rinf}
	Suppose that (H1) holds and the direction $d_k$ satisfies the first item of Lemma \ref{de}. If the
	sequence $\{x_{k}\}$  generated by Algorithm \ref{alg1}, then we have:
	\begin{equation*}
	\lim_{k\rightarrow \infty}R_k= \lim_{k\rightarrow \infty}f_k.
	\end{equation*}
\end{lemma}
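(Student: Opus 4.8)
The plan is to reduce the claim to the already-established Lemma~\ref{inf} by rewriting $R_k$ as a convex combination of $f_{l(k)}$ and $f_k$ and then controlling the gap between these two quantities. The whole argument is a short squeeze, so no new estimate on the iterates is needed.

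First I would substitute the definition $R_k=\eta_k f_{l(k)}+(1-\eta_k)f_k$ and rearrange to isolate the discrepancy from $f_k$, obtaining
\begin{equation*}
R_k-f_k=\eta_k\bigl(f_{l(k)}-f_k\bigr).
\end{equation*}
Next I would use that the non-monotone parameter is uniformly bounded, $\eta_k\in[\eta_{\min},\eta_{\max}]\subseteq[0,1]$, so that $0\le\eta_k\le 1$ for every $k$. Taking absolute values then gives
\begin{equation*}
|R_k-f_k|=\eta_k\,|f_{l(k)}-f_k|\le |f_{l(k)}-f_k|.
\end{equation*}
Finally I would invoke Lemma~\ref{inf}, which guarantees $\lim_{k\to\infty}f_{l(k)}=\lim_{k\to\infty}f_k$; in particular this common limit exists (call it $L$) and $f_{l(k)}-f_k\to 0$. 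Hence the right-hand side above tends to zero, forcing $R_k-f_k\to 0$, and therefore $\lim_{k\to\infty}R_k=L=\lim_{k\to\infty}f_k$.

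There is really no deep obstacle here: the substantive content is packaged entirely inside Lemma~\ref{inf}. The only two points requiring care are (i) confirming that $\lim_{k\to\infty}f_k$ genuinely exists, which is precisely the assertion of Lemma~\ref{inf} and may therefore be cited rather than reproven, and (ii) making the uniform bound $0\le\eta_k\le 1$ explicit. The latter follows directly from the defining interval for $\eta_k$, and is also immediate from the trigonometric formula \eqref{eq:etakn}, since $0.95\sin(\cdot)+0.01\le 0.96<1$ while the additive constant keeps $\eta_k$ nonnegative. With these two observations the squeeze estimate closes the proof at once.
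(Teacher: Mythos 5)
Your proposal is correct and takes essentially the same route as the paper: the paper derives the two-sided bound $f_k\leq R_k\leq f_{l(k)}$ from the convex-combination form of $R_k$ and then squeezes using Lemma~\ref{inf}, while your identity $R_k-f_k=\eta_k\bigl(f_{l(k)}-f_k\bigr)$ together with $0\leq\eta_k\leq 1$ is the same squeeze written in a slightly different form. The only cosmetic difference is that you work with absolute values rather than the ordered inequalities, which the paper obtains from \eqref{rk} and \eqref{rkk}.
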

\begin{proof}
	Using (\ref{rk}) and (\ref{rkk}), we conclude that:
	\begin{equation*}
	f_k\leq R_k\leq f_{l(k)}
	\end{equation*}
	By applying  Lemma \ref{inf}, we obtain the result.
\end{proof}
Now, we can prove the Lemma \ref{low-bou}. 
\begin{proof} Proof of Lemma \ref{low-bou}:\\
	We consider two cases:
	If $\frac{\alpha_k}{\rho}\geq\beta_1$, which implies that $\alpha_k\geq \beta_1\rho$ and it completes the proof.
	Now, we assume that $\frac{\alpha_k}{\rho}<\beta_1$. In this case we have  $\alpha_k< \beta_1\rho$. Therefore, the non-monotone
	condition does not hold, i.e.,
	\begin{equation}\label{nonmonotone}
	f(x_k+\frac{\alpha_k}{\rho} d_k)> R_k+\gamma \frac{\alpha_k}{\rho} g_k^T d_k
	\end{equation}
	Now, by using the mean value theorem, i.e., Cauchy–Schwarz inequality, we conclude that:
	\begin{eqnarray}
	f(x_k+\alpha d_k)&=&f(x_k)+\alpha g_k^Td_k+
	\int_0^{1}\alpha(g(x_k+t\alpha d_k)-g_k)^Td_k dt\nonumber\\
	&\leq& f(x_k)+\alpha g_k^Td_k+\alpha\|d_k\|\int_0^{1}\|(g(x_k+t\alpha d_k)-g_k^T)\|dt\nonumber\\
	\end{eqnarray}
	Now, by using ($H2$) and the fact that $R_k\geq f_k$, we have:
	\begin{eqnarray}\label{cauchy}
	f(x_k+\alpha d_k)&\leq& f(x_k)+\alpha g_k^Td_k+L\alpha^2\|d_k\|^2\int_0^{1}t dt\nonumber\\
	& =& f(x_k)+\alpha g_k^Td_k+\frac{L}{2}\alpha^2\|d_k\|^2\nonumber\\
	&\leq& R_k+\alpha g_k^Td_k+\frac{L}{2}\alpha^2\|d_k\|^2
	\end{eqnarray}
	By putting $\alpha=\frac{\alpha_k}{\rho}$ and combining  (\ref{nonmonotone}) with (\ref{cauchy}),  we conclude that:
	\begin{eqnarray*}
		R_k+\frac{\alpha_k}{\rho} g_k^Td_k+\frac{L}{2\rho^2}\alpha_2^2\|d_k\|^2\geq R_k+\gamma \frac{\alpha_k}{\rho} g_k^T d_k.
	\end{eqnarray*}
    Using Lemmas \ref{decent} and \ref{aboveserch}, we conclude that:
	\begin{equation}\label{lowlam}
	\frac{L}{2\rho}(1+\omega)^2\alpha_2^2\|g_k\|^2\geq\frac{L}{2\rho}\alpha_2^2\|d_k\|^2>-(1-\gamma)g^T_kd_k\geq (1-\omega)(1-\gamma)\|g_k\|^2.
	\end{equation}
	It implies that:
	\begin{equation*}
	\alpha_k\geq \frac{2(1-\omega)\rho(1-\gamma)}{L(1+\omega)^2}.
	\end{equation*}
 
\end{proof}

Now, we prove the Theorem \ref{glob}, which shows that Algorithm \ref{alg1} has global convergence. 
\begin{proof} Proof of Theorem \ref{glob}:\\
	We have:
	\begin{eqnarray}\label{con1}
	R_k+\gamma\alpha_kg^T_k d_k &\geq& f_{k+1}\nonumber\\
	\Rightarrow R_k-f_{k+1}&\geq&-\gamma\alpha_kg^T_k d_k\geq c_1\gamma\alpha_k\|g_k\|^2\geq0.
	\end{eqnarray}
	This fact together with Lemma \ref{rinf} imply
	that $ \lim_{k\rightarrow \infty} \|g_k\|=0$.
	 This shows that proposed 
	 algorithm has global convergence.
\end{proof}

\section{Appendix B}\label{AppB}
\subsection{Numerical Results}
Here, we present some implementation level details. Since our algorithm improves the non-monotone scheme,  we chose two algorithms from the non-monotone line search category and six algorithms from the  Wolfe line search area for performing the comparison. We  select the following two state of art algorithms in the non-monotone category.
These algorithms calculate the value of step size using a the non-monotone line search strategy in each iteration.
\begin{description}
	\item[$\bullet$] Ahookhosh's strategy in \citep{Ahookhosh122}
	\item[$\bullet$] Amini's strategy in \citep{Amini14}
\end{description}
Following six other algorithms that use the Wolfe line search conditions to compute step size in each iteration are used for comparison.
\begin{description}
	\item[$\bullet$] $\beta^{Jiang}$ proposed in \citep{Jiang}
	\item[$\bullet$] $\beta^{Zhang}$ proposed in \citep{Zhang}
	\item[$\bullet$] $\beta^{ILC}$ proposed in \citep{Jiang}
	\item[$\bullet$] $\beta^{IPRP}$ proposed in \citep{Jiang}
	\item[$\bullet$] $\beta^{IHS}$ proposed in \citep{Jiang}
	\item[$\bullet$] $\beta^{ILS}$ proposed in \citep{Jiang}
\end{description}
All algorithms were coded in MATLAB 2017 environment and tested on a laptop (Intel(R) Core(TM) i5-7200U CPU 3.18 GHz with 12GB  RAM). 
For all algorithms, we use the following initial values.
\begin{eqnarray*}
	\gamma=10^{-4},\qquad N=5,\qquad \rho=0.75,\qquad c=10^{-4}.
\end{eqnarray*}
All the experiments terminate when the following conditions are met:
\begin{description}
	\item[$\bullet$] $\|g_k\|<10^{-6}$
	\item[$\bullet$] The number of iterations is greater than 20000.
\end{description}
As   parameter $\omega$ can take positive real values in the interval $(0,1)$, there are many choices for it. We tried several strategies and chosen the following strategy, which perform better. The key idea of this choice is   from the structure of the conjugate gradient parameter proposed by Jiang \citep{Jiang}.
\begin{eqnarray*}
\omega_k=\left\{ 
\begin{array}{lr}
	0.001\qquad\qquad\quad\qquad if~\frac{| g^T_kd_{k-1}|}{-g^T_{k-1}d_{k-1}}\leq0,&\\
		0.999\qquad\qquad\quad\qquad if~\frac{| g^T_kd_{k-1}|}{-g^T_{k-1}d_{k-1}}\geq 1,&\\
	\frac{| g^T_kd_{k-1}|}{-g^T_{k-1}d_{k-1}}\qquad\qquad\qquad otherwise.&\\
\end{array} \right.
\end{eqnarray*}

The following subsection presents the results on a set of $110$ standard test problems.
\subsection{A set of standard test problems}
We run all the algorithms on 110 standard test problems from \citep{Andrei} with dimensions ranging between $2$ to $5,000,000$. When the algorithm stops under the second condition, i.e., the number of iterations  is greater than $20000$,  the method is deemed to fail for solving the corresponding test problem.
The comparison between considered algorithms is based on the number of function evaluations, the number of gradient evaluations,
the number of iterations, and the CPU time(s). 
\\
To visualize the complete behaviour of the algorithms, we use the  performance profiles proposed by  Dolan and Moré \citep{Dolan}.
Note that the performance profile $p_s(\tau): \Bbb{R}\mapsto [0, 1]$ for an algorithm is a non-decreasing, piece-wise constant function, continuous from the right at each breakpoint. Moreover, the value of $p_s(1)$ denotes the probability that the algorithm will win over the rest.

Suppose that $K$ is a set of $n_k$ test functions and $S$ is a set of  $n_s$  solvers. For $s\in S$ and function $k\in K$, consider $a_{p,s}$ as  the number of gradient evaluations, objective function evaluations, CPU Time, or the number of iterations required to solve function $k \in K$ by algorithm  $s\in S$. Then the algorithms comparison
is based on the performance ratio as follows:
\begin{equation*}
r_{k,s}=\frac{a_{k,s}}{\min\{a_{k,s},~k\in K,~s\in S\}}
\end{equation*}
We obtain the overall evaluation of each algorithm by:
\begin{eqnarray}
p_s(\tau)= \frac{1}{n_k}~size~\{k\in K: r_{k,s}\leq \tau\}
\end{eqnarray}
In general, solvers with high values of $p_s(\tau)$ or in the upper right of the figure represent the best algorithm.

The performance profile  in terms of   function evaluations and the number of gradient evaluations are presented in figures \ref{fun} and \ref{gri}, respectively.

\begin{figure}[h!]
	\centering
	\includegraphics[width=.95\textwidth]{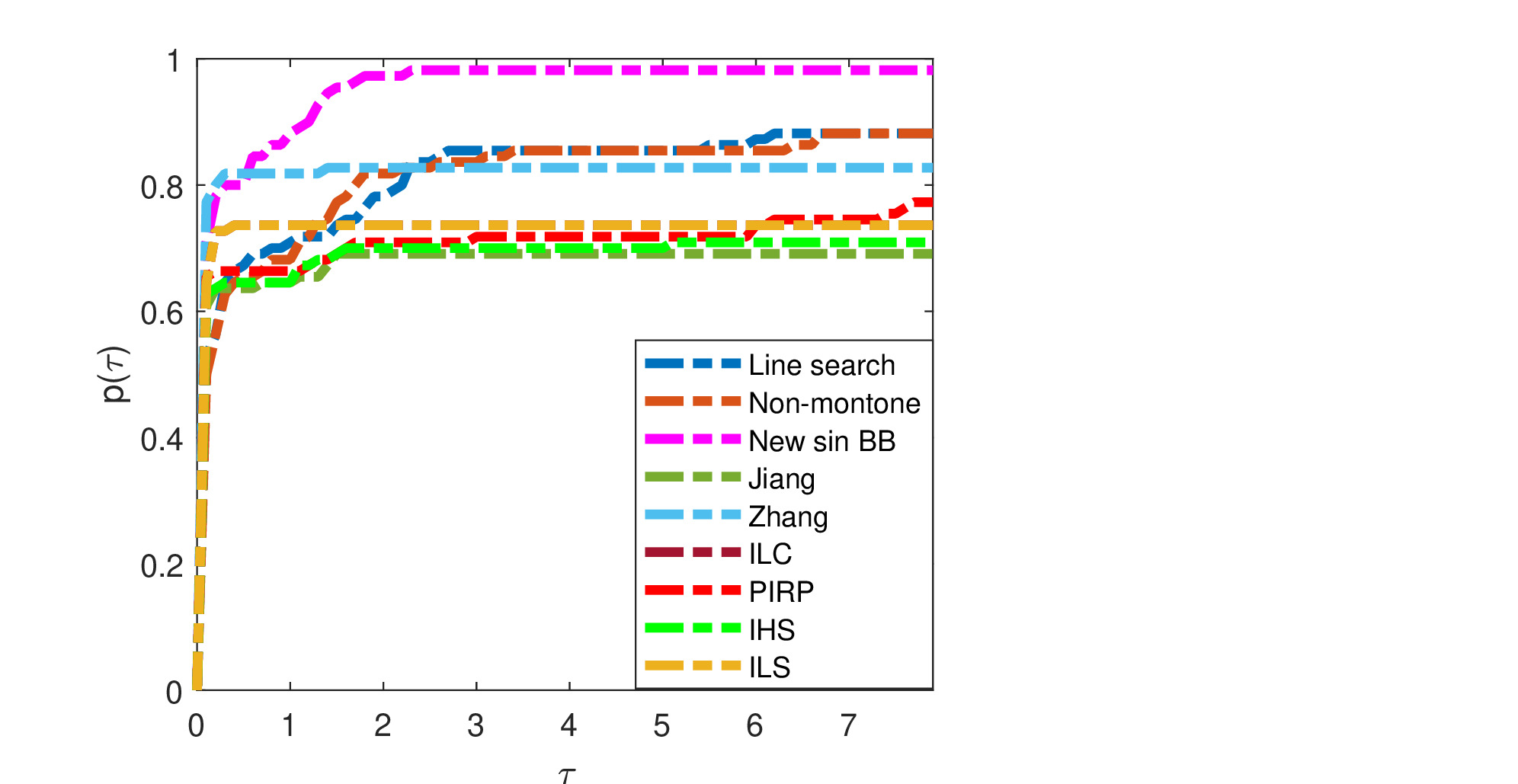}
	\caption{Performance profiles of the total number of function evaluations}
	\label{fun}
\end{figure}

\begin{figure}[h!]
	\centering
	\includegraphics[width=.95\textwidth]{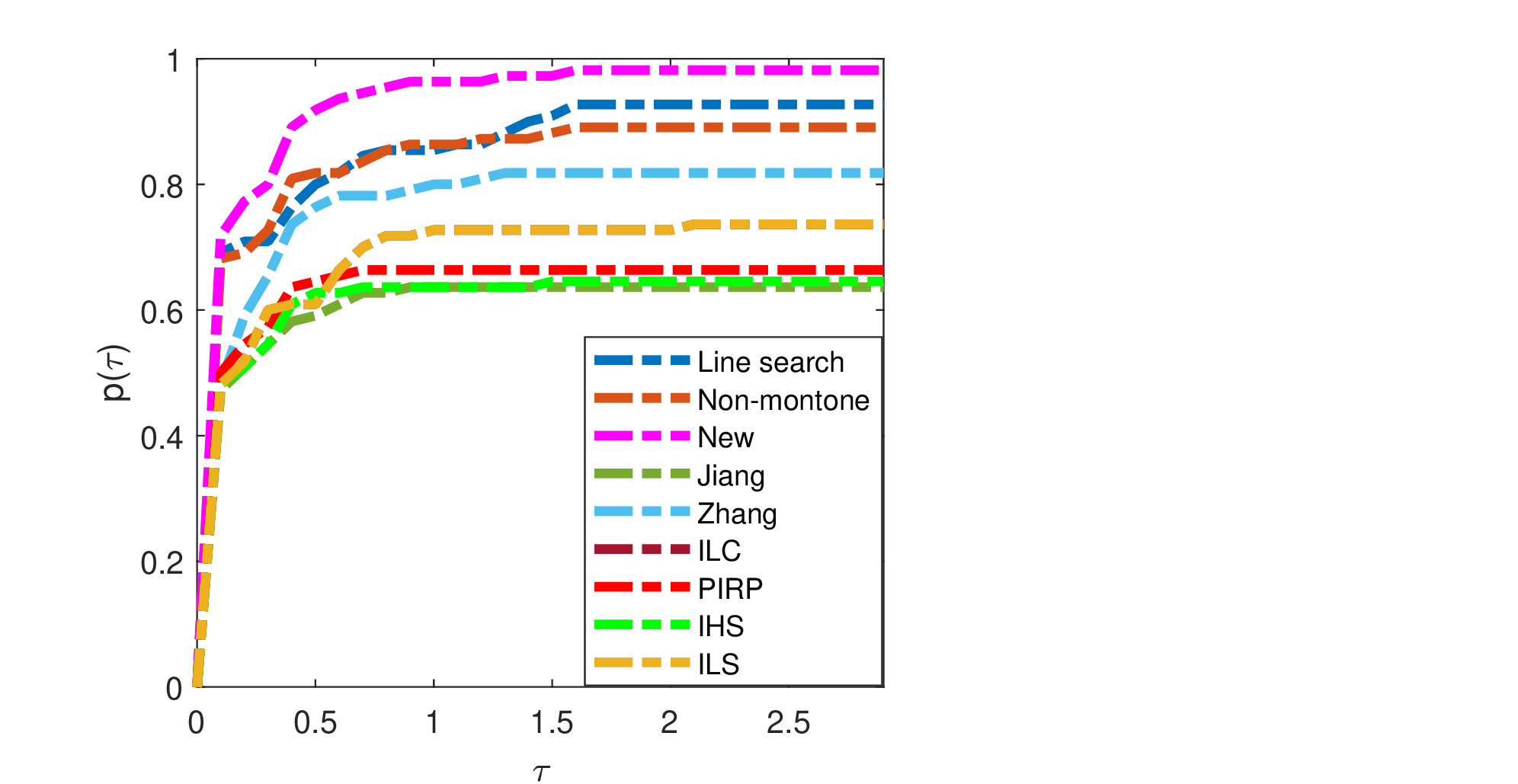}
	\caption{Performance profiles of the total number of  gradient evaluations}
	\label{gri}
\end{figure}

Figures \ref{out} and \ref{cpu} show the performance profile  in terms of the number of iterations and the CPU time(s) for the proposed algorithm and eight other algorithms.

\begin{figure}[h!]
	\centering
	\includegraphics[width=.95\textwidth]{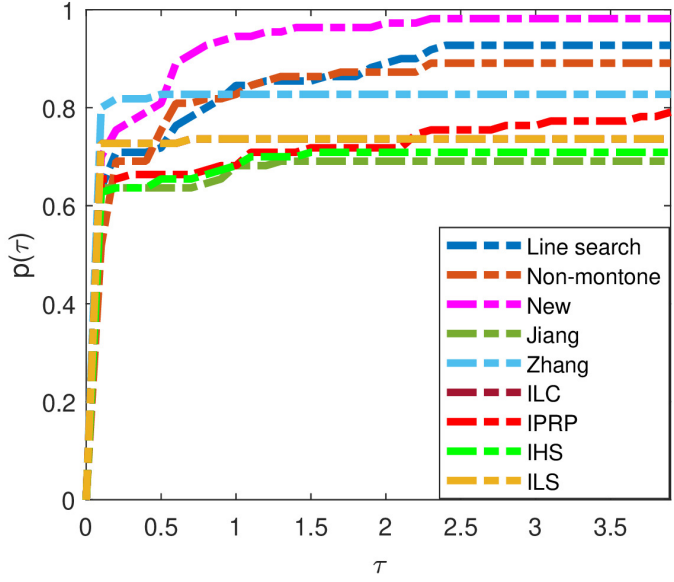}
	\caption{Performance profiles of the total number of outer iterations}
	\label{out}
\end{figure}


\begin{figure}[h!]
	\centering
	\includegraphics[width=.95\textwidth]{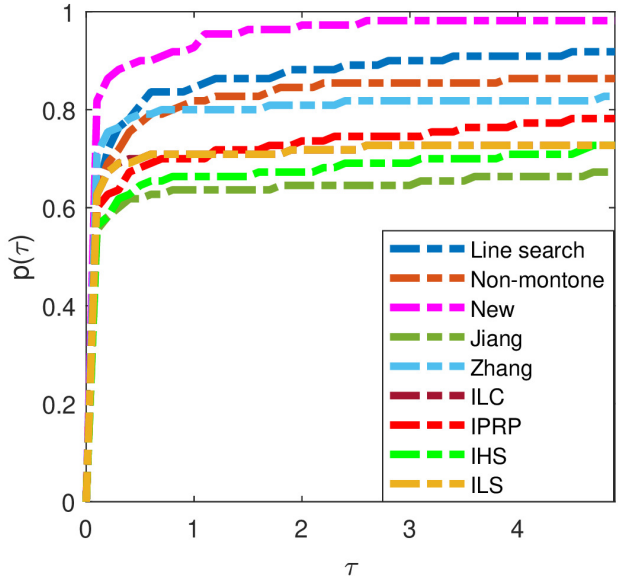}
	\caption{Performance profiles of CPU Time}
	\label{cpu}
\end{figure}

We conclude from the figures that the  proposed algorithm can solve $98\%$ of the test problems. The performance profiles for the number of iterations,   total CPU, time,   number of gradient evaluations, and   number of function evaluations indicate that  the proposed method has a high  computational performance compared to the other methods.

\subsection{Some non-negative matrix factorization test problems}
Here, we apply our algorithm to Non-Negative Matrix Factorization (NMF)
which has several applications in image processing, such as face detection problems. 
 Given a non-negative matrix $V\in\Bbb{R}^{m\times n}$, a NMF
finds two non-negative matrices
$W\in\Bbb{R}^{m\times k}$ and $H\in\Bbb{R}^{k\times n}$ with
$k\ll\min(m,n)$ such that
\begin{equation}\label{non-neg}
V\approx WH.
\end{equation}
This problem can be formulated as a non-convex optimization problem:
\begin{equation}\label{opti-nmf}
\min_{W,H\geq0} F(W,H)=\frac{1}{2}\|V-WH\|_{F}^2.
\end{equation}
 In recent years, several iterative approaches have been introduced for solving (\ref{opti-nmf}), for example, \citep{Han,Lee01}. The alternating non-negative least squares (ANLS)
framework is a popular approach for solving  (\ref{opti-nmf}), which finds the optimal solution by solving the following two convex sub-problems:
\begin{eqnarray}\label{nmf-1}
W^{k+1} =\text{arg}\min_{W\geq0}F(W,H^k) =\frac{1}{2}\|V-WH^k\|_{F}^2,
\end{eqnarray}
and
\begin{eqnarray}\label{nmf-2}
H^{k+1} =\text{arg}\min_{H\geq0}F(W^{k+1},H) =\frac{1}{2}\|V-W^{k+1}H\|_{F}^2.
\end{eqnarray}
To solve this problem, we use the following strategy:
\begin{description}
	\item[S0]Algorithm starts with the initial point, i.e., $\bar{W}\geq0$ and $\bar{H}\geq0$, set $k = 0$.
	\item[S1] Stop if $
	\|[\nabla_HF(\bar{W}^k,\bar{H}^k),\nabla_WF(\bar{W^k},\bar{H}^k)]\|_F\leq\epsilon
	\|[\nabla_HF(\bar{W^0},\bar{H^0}),\nabla_WF(\bar{W^0},\bar{H}^0)]\|_F.
$
	\item[S2] To get $W^{k+1}$, solve the sub-problem: $\min_{W\geq0}F(W,\bar{H}^k) =\frac{1}{2}\|V-W\bar{H}^k\|_{F}^2$.
	\item[S4] Set $\bar{W}^{k+1}=W^{k+1}$.
	\item[S2] To get $H^{k+1}$, solve the sub-problem: $\min_{H\geq0}F(\bar{W}^{k+1}, H) =\frac{1}{2}\|V-\bar{W}^{k+1}H\|_{F}^2$. 
	\item[S5] Set $\bar{H}^{k+1}=H^{k+1}$.
	\item[S6] Set $k:=k+1$ and go to S1.
\end{description}
Now, we use the above setup to solve some NMF problems using our algorithm and compare it to Zhang's algorithm \citep{Zhang} which had the best results  for solving a set of standard test problems.  To this end, we generate a random matrix $V $ as random with elements in $[0,1]$. We run the algorithm for  matrices with ranks  $\{5,10,15,20,40,50\}$. For each case, we run each of the algorithms 10 times. We, calculated  the average of the results and presented them in Table \ref{tab-rand}. In this table, $m$ and  $n$ denote the number of rows and columns of matrix  $V$, $k$ 
 denote the matrix rank. The number of outer iterations  is denoted by ``Iter''. We use the ``Niter'' for   the number of inner iterations. The value of gradient is denoted by ``Pgn''. The CPU time and error for each of the algorithms are denoted by ``Time'' and ``Error'' respectively.  

%
\begin{table}[]
    \centering
    \begin{tabular}[hp]{abababa}
\hline
{$m$ \,\,\,\,\,\, $n$ \,\,\,\,\,\, $k$} & {${Iter}$} & {${Niter}$} & {${Pgn}$}
& {${Time}$} & {$Error$} & {$Algorithm$} \\ \hline
{$50\times25\times5$}        &29.20 & 187.30 & 0.0045 & 0.02 & 0.014 & {Zhang}\\
                              & 26.70 & 72.60 & 0.0043 & 0.02 & 0.014 & {New}  \\
                          \hline
{$100\times50\times5$}          & 24.20 & 165.30 & 0.058 & 0.03 & 0.12 & {Zhang} \\
                                  & 22.30 & 85.80 & 0.033 & 0.01 & 0.12 & {New} \\
                                  \hline
{$100\times 200\times 15$}        & 25.30 & 196.10 & 0.015 & 0.137 & 0.086 & {Zhang} \\
                                  & 17.50 & 112.80 & 0.018 & 0.032 & 0.085 & {New} \\
                                  \hline
  {$200\times100\times10$}        & 19.00 & 134.80 & .093 & 0.10 & 0.08 & {Zhang} \\
                                  & 17.10 & 103.40 & .073 & 0.03 & 0.08 & {New} \\
                                 \hline
  {$300\times100\times20$}        & 25.00 & 212.40 & 0.55 & 0.32 & 0.08 & {Zhang} \\
                                  & 20.60 & 131.50 & 0.44 & 0.15 & 0.08 & {New}\\
                                   \hline
  {$300\times500\times20$}         &29.00 & 128.90 & 1.98 & 0.75 & 0.054& {Zhang} \\
                                  & 25.80 & 90.80 & 0.96 & 0.14 & 0.054 &{New} \\
                                   \hline
  {$500\times100\times20$}        & 36.10 & 231.80 & 7.6 & 0.48 & 0.06& {Zhang} \\
                                  & 31.70 & 90.70 & 4.8 & 0.12 & 0.05 & {New} \\
                                   \hline
  {$1000\times500\times50$}       &36.10  & 187.50  & 18.5 & 2.89 & 0.04 & {Zhang}\\
                                  & 32.40 & 130.80  & 12.75 & 1.06 & 0.03 & {New}\\
                                   \hline

    \end{tabular}
    \caption{The results of performing  new algorithm and Zhang's algorithm on  some  random datasets}
\label{tab-rand}
\end{table}
As we see that in  most cases, the proposed algorithm performs better than previous best algorithm due to Zhang \citep{Zhang}.



\end{document}